\newenvironment{proof*}{\noindent\emph{Proof}}{$\square$\smallskip}
\newtheorem{theorem}{Theorem}[section]
\newtheorem{Definition}[theorem]{Definition}
\newtheorem{lemma}[theorem]{Lemma}
\newtheorem{Example}[theorem]{Example}
\newtheorem{corollary}[theorem]{Corollary}
\newtheorem{Remark}[theorem]{Remark}
\newtheorem{proposition}[theorem]{Proposition}
\newtheorem{Exercise}[theorem]{Exercise}
\newtheorem{Exercises}[theorem]{Exercises}
\newtheorem{Notation}[theorem]{Notation}
\newtheorem{Convention}[theorem]{Convention}
\newtheorem{standing assumption}[theorem]{Standing Assumption}
\newenvironment{definition}{\begin{Definition}\normalfont}{\end{Definition}}
\newenvironment{example}{\begin{Example}\normalfont}{\end{Example}}
\newenvironment{remark}{\begin{Remark}\normalfont}{\end{Remark}}
\title[Simple expansion sets and non-positive curvature]{Simple expansion sets and non-positive curvature}  
\author[D.~S.~Farley]{Daniel S. Farley}
\address{Department of Mathematics\\ Miami University\\ Oxford, OH 45056 U.S.A.}
\email{farleyds@miamioh.edu}
\date{\today}
\begin{document}

\begin{abstract}
An \emph{expansion set} is a set $\mathcal{B}$ such that each
$b \in \mathcal{B}$ is equipped with a set of expansions $\mathcal{E}(b)$. 
The theory of expansion sets offers a systematic approach to the construction of classifying spaces for generalized Thompson groups, as described in \cite{Farley}.

We say that $\mathcal{B}$ 
is \emph{simple} if proper expansions are unique when they exist, or, equivalently, if $|\mathcal{E}(b)| \leq 2$ for all $b \in \mathcal{B}$. 

We will prove that any given simple expansion set determines a cubical complex with a metric of non-positive curvature. In many cases, the cubical complex will be CAT(0). 
We are thus able to recover proofs that Thompson's groups $F$, $T$, and $V$  \cite{MyThesis, Morethesis}, Houghton's groups $H_{n}$, and groups defined by finite similarity structures \cite{Hughes, FH1, FH2} all act on CAT(0) cubical complexes. We further state a sufficient condition for the cubical complex to be locally finite, and show that the latter condition is satisfied in the cases of $F$, $T$, $V$, and $H_{n}$.
\end{abstract}

\subjclass[2010]{Primary 20F65; Secondary 20F67}

\keywords{generalized Thompson groups, CAT(0) cubical complexes} 

\maketitle

\setcounter{tocdepth}{2} \tableofcontents

\section{Introduction}
The class of generalized Thompson groups has been widely studied in recent years. 
 Many such groups are characterized by local (or piecewise) definitions, such as Thompson's group $V$ \cite{CFP}, the Brin-Thompson groups $nV$ \cite{BrinHighD}, Stein-Thompson groups \cite{Stein}, and Nekrashevych-Roever groups \cite{Rover,Nek}, among others.  
In \cite{Farley} the author, streamlining joint work with Hughes \cite{FH2}, introduced expansion sets, which are a tool to facilitate the construction of classifying spaces for generalized Thompson groups.  Consider a generalized Thompson group $G$ acting on a space $X$. 
One takes a ``piecewise" approach to the construction of the classifying space for $G$, in the form of a choice of expansion set $\mathcal{B}$. 
Each $b \in \mathcal{B}$ has a partially ordered set $\mathcal{E}(b)$ (of \emph{expansions}) and a \emph{support} $supp(b) \subseteq X$ assigned to it. One can then form a \emph{full-support complex} $\Delta^{f}_{\mathcal{B}}$. A vertex of
$\Delta^{f}_{\mathcal{B}}$ is a finite subset $\{ b_{1}, \ldots, b_{k} \}$ of $\mathcal{B}$, where
the $b_{i}$ have disjoint support and the union of their supports is $X$. The simplicial complex structure $\Delta^{f}_{\mathcal{B}}$ is
determined completely by the sets $\mathcal{E}(b)$, which we view as simplicial complexes. The general approach of \cite{Farley} is to reduce questions about the topology of $\Delta^{f}_{\mathcal{B}}$ to questions about the (usually much smaller) sets $\mathcal{E}(b)$. We will use Thompson's group $V$  as a running example -- see Examples \ref{example:Vpt1}, \ref{example:Vpt2}, and Subsection \ref{subsection:V} for a discussion of
$V$, and the remainder of Section \ref{section:apps} for further examples.

The simplicial complexes $\Delta^{f}_{\mathcal{B}}$ are similar to the ones introduced by Stein \cite{Stein}. Indeed, \cite{Farley} and \cite{FH2} offer a general theory 
of what are sometimes known as \emph{Stein complexes}. 

In this note, we consider an especially simple type of expansion set, in which $|\mathcal{E}(b)| \leq 2$ for each $b \in \mathcal{B}$. (Thus, each simplicial complex $\mathcal{E}(b)$ is either a point or a line segment.) We call these expansion sets \emph{simple}. Our main theorem says that $\Delta^{f}_{\mathcal{B}}$ is a non-positively curved cubical complex when $\mathcal{B}$ is simple, and, in fact,
$\Delta^{f}_{\mathcal{B}}$ is a CAT(0) cubical complex under mild additional hypotheses. We can also state a sufficient condition for $\Delta^{f}_{\mathcal{B}}$ to be locally finite.

Our main theorem has several applications. For instance, we sketch  proofs that Thompson's groups $F$, $T$, and $V$ \cite{CFP}, Houghton's groups $H_{n}$ \cite{Houghton}, and groups determined by finite similarity structures \cite{Hughes} (or \emph{FSS groups}) all act on CAT(0) cubical complexes with finite stabilizers. In the cases of $F$, $T$, $V$, and $H_{n}$, the complexes are also locally finite. One suspects that the complexes for FSS groups are also locally finite in most cases, but we state no general result to that effect.

The first proofs that $F$, $T$, and $V$ act properly on CAT(0) cubical complexes were published by the author (\cite{MyThesis} for $F$, and \cite{Morethesis} for $T$ and $V$). The first proof that the Houghton groups $H_{n}$ act properly on CAT(0) cubical complexes is due to Lee \cite{Lee}. Another proof that the Houghton groups act properly on CAT(0) cubical complexes appears in \cite{FHBraided}. A common feature of the proofs in \cite{MyThesis}, \cite{Morethesis}, and \cite{FHBraided} is that all appeal to the theory of diagram groups \cite{GubaSapir}. The reasoning in this note is arguably more direct, appealing only to the existence of a simple expansion rule. A proof that FSS groups act on CAT(0) cubical complexes appeared in \cite{Hughes}, in an appendix by the author. The latter proof, however, did not give an explicit description of the cubical complex, but rather appealed to a well-known construction
by Sageev \cite{Sageev}. In this note, the construction is explicit. The complexes given here are equivalent to the ones from \cite{FH2}, but the proof that they are CAT(0) is new.


\section{Simple Expansion Sets}

In this section, we will define simple expansion sets and their associated simplicial complexes $\Delta^{f}_{\mathcal{B}}$. To define the broader class of expansion sets would take us into complications that are irrelevant to the goals of this paper. We refer the interested reader to \cite{Farley} for the complete definition. We note, however, that every simple expansion set is an expansion set in the sense of \cite{Farley}. 


\subsection{The definition of simple expansion set}

The following is slightly adapted from \cite{Farley} (Definition 2.1).

\begin{definition} \label{definition:simpleexpansionsets}
(Simple expansion sets; cf. Definition 2.1 from \cite{Farley}) 
A \emph{simple expansion set over $X$} is a $4$-tuple $(\mathcal{B}, X, supp, \mathcal{E})$, where $\mathcal{B}$ and $X$ are sets, and $supp: \mathcal{B} \rightarrow \mathcal{P}(X)$ and $\mathcal{E}$ are functions. 

For each $b \in \mathcal{B}$, $supp(b)$ is required to be a non-empty subset of $X$. The function $supp$ is called the \emph{support function}, and $supp(b)$ is the \emph{support} of $b$.  

A \emph{vertex} is a finite subset $v = \{ b_{1}, \ldots, b_{k}\} \subseteq \mathcal{B}$ such that $supp(b_{i}) \cap supp(b_{j}) = \emptyset$ when $i \neq j$. The set of all vertices is denoted $\mathcal{V}_{\mathcal{B}}$. For each $v \in \mathcal{V}_{\mathcal{B}}$, we define 
\[ supp(v) = \bigcup_{\ell =1}^{k} supp(b_{\ell}); \quad \quad  
P(v) = \{ supp(b_{\ell}) \mid \ell \in \{ 1, \ldots, k \} \}.  \]
The collection $P(v)$ is the \emph{partition induced by $v$}. It is a partition of $supp(v)$.

 The function $\mathcal{E}$ assigns a set of vertices, denoted $\mathcal{E}(b)$, to each $b \in \mathcal{B}$. 
 The sets $\mathcal{E}(b)$ are required to satisfy the following three conditions:
\begin{enumerate}
\item $|\mathcal{E}(b)| \leq 2$;
\item $ \{ b \} \in \mathcal{E}(b)$;
\item If $|\mathcal{E}(b)| = 2$ and $v \in \mathcal{E}(b) - \{ \{ b \} \}$, 
then $P(v)$ is a proper refinement of $P(\{ b \})$. 
\end{enumerate}
\end{definition}

\begin{remark}
Note that if $v \in \mathcal{E}(b)$, then $supp(v) = supp(b)$, by property (3) from Definition \ref{definition:simpleexpansionsets}.

We note also that the set of vertices $\mathcal{V}_{\mathcal{B}}$ is much larger than the vertex set of the full support complex $\Delta^{f}_{\mathcal{B}}$ to be considered later. 
\end{remark}

\begin{remark} (Comparison with the definition of expansion sets)
The above definition is essentially the definition of expansion sets from \cite{Farley} (Definition 2.1), but with the simplifying assumption that $|\mathcal{E}(b)| \leq 2$. 
We address some (apparent and real) departures from the earlier definition:
\begin{itemize}
\item In the earlier definition, $\mathcal{E}(b)$ is a partially ordered set. In the above definition, this partial order is implicit: we can simply let $\{ b \} \leq v$
if $\mathcal{E}(b) = \{ \{ b \}, v \}$. If $\mathcal{E}(b) = \{ \{ b \} \}$, then the partial order is the obvious one. These are the only two possible forms that the set $\mathcal{E}(b)$ can take.
\item Properties (2) and (3) from the above definition correspond to (1) and (2) (respectively) from Definition 2.1 of \cite{Farley}. Property (3) from Definition 2.1 of \cite{Farley} is a compatibility condition that is trivially satisfied because of our property (1). 
\item The assumption that $|\mathcal{E}(b)| \leq 2$ is indeed new. Definition 2.1 of \cite{Farley} allowed $\mathcal{E}(b)$ to have any number of elements (even infinitely many). 
\end{itemize}
\end{remark}

\begin{remark}
We will almost always refer to a (simple) expansion set as simply $\mathcal{B}$, rather than the $4$-tuple $(\mathcal{B}, X, supp, \mathcal{E})$. The same was done in \cite{Farley}.
\end{remark}

\begin{example} (The expansion set construction for Thompson's group $V$)
\label{example:Vpt1} 
 We will assume that the reader is acquainted with Thompson's group $V$ in what follows. 
The source \cite{CFP} contains a standard introduction. We note that the expansion set construction of $V$ (to be revisited here for the reader's convenience)
has already appeared in \cite{Farley} and \cite{FH2}.

Let $X = \prod_{i=1}^{\infty} \{ 0, 1 \}$. This is the Cantor set, whose members are infinite binary sequences. For a given finite binary sequence $\omega$, we let $B_{\omega}$ denote the
set of all infinite binary sequences beginning with the prefix $\omega$. We call the sets $B_{\omega}$ \emph{balls} since they are metric balls with respect to a natural ultrametric $d$.

Let $S_{V}$ denote the set of all transformations $\sigma_{\omega_{1}}^{\omega_{2}}: B_{\omega_{1}} \rightarrow B_{\omega_{2}}$
such that, for a given sequence $\omega' \in B_{\omega_{1}}$, the effect of applying 
$\sigma_{\omega_{1}}^{\omega_{2}}$ is to remove the prefix $\omega_{1}$, and replace it with 
$\omega_{2}$:
\[ \omega' = \omega_{1}a_{1}a_{2}a_{3} \ldots  \quad \mapsto \quad \omega_{2}a_{1}a_{2}a_{3}\ldots. \]
We also let $0$ denote the transformation with empty domain and codomain, and include $0$ as a member of $S_{V}$. With these conventions, $S_{V}$ is closed under compositions and inverses, making it an inverse semigroup. (Here composition is defined ``on overlaps": the domain of $f \circ g$ is the collection 
of all members of the domain of $g$ that map to members of the domain of $f$.)

Any member of $V$ is a finite disjoint union of transformations from $S_{V}$. Indeed, given 
a finite collection $C = \{ \sigma_{1}, \sigma_{2}, \ldots, \sigma_{m} \}  \subseteq S_{V}$, $C$ determines a member $g_{C}$ of $V$ if and only if the domains of the $\sigma_{i}$ partition $\mathcal{C}$, and, likewise, the codomains of the $\sigma_{i}$ partition $\mathcal{C}$. Conversely, 
every member of $V$ is equal to $g_{C}$, for some collection $C \subseteq S_{V}$. The collection $C$ is not unique (i.e., the map $C \mapsto g_{C}$ is many-to-one), for essentially the same reason that tree representatives of members  of $V$ are not unique.  

Now we can define the expansion set construction for $V$. We consider all pairs $(f,B)$, where
$B = B_{\omega}$ for some finite binary sequence $\omega$, and $f$ is a finite disjoint union of 
members of $S_{V}$ such that $B$ is the domain of $f$. (This is equivalent to $f$ being the restriction of some member of $V$ to $B$ in the case when $B$ is a proper subset of $X$.) We write $(f_{1},B_{1}) \sim (f_{2},B_{2})$ if there is a transformation
$\sigma \in S_{V}$ such that $\sigma(B_{1}) = B_{2}$, and $f_{2} \circ \sigma = f_{1}$. The relation
$\sim$ is an equivalence relation on the set of pairs $(f,B)$. We let $[f,B]$ denote the equivalence class of $(f,B)$, and let $\mathcal{B}$ denote the set of all such equivalence classes. The set $\mathcal{B}$ is the required expansion set. For a given equivalence class $b=[f,B_{\omega}]$, we let $supp(b) = f(B_{\omega})$ and 
\[ \mathcal{E}(b) = \{ \{ b \}, \{ [f_{\mid}, B_{\omega0}], [f_{\mid}, B_{\omega1}] \} \}. \]
Thus, the allowable expansions from $b$ consist of the trivial expansion $\{ b \}$, and the subdivision 
into left and right halves. It is straightforward to show that $\mathcal{E}$ and $supp$ are well-defined. Since $|\mathcal{E}(b)| = 2$ for all $b \in \mathcal{B}$, $\mathcal{B}$ is simple. 
\end{example}

\subsection{The complex $\Delta^{f}_{\mathcal{B}}$}

Next we will consider the complexes $\Delta^{f}_{\mathcal{B}}$. For the most part, we will simply summarize the relevant portions of \cite{Farley}, while recalling the definitions that we need.

\begin{definition} \label{definition:restriction} (Restriction functions; cf. Definition 2.1 from \cite{Farley})
Let $b \in \mathcal{B}$. For a given vertex $v \in \mathcal{V}_{\mathcal{B}}$, we define
\[ r_{b}(v) = \{ b' \in v \mid supp(b') \subseteq supp(b) \}. \]
This is the \emph{restriction of $v$ to the support of $b$}.
\end{definition}


\begin{definition} \label{definition:complexes}
(The full-support subcomplex $\Delta^{f}_{\mathcal{B}}$)
For a given expansion set $\mathcal{B}$, we define a simplicial complex
$\Delta^{f}_{\mathcal{B}}$ as follows:
\begin{enumerate}
\item The vertices of $\Delta^{f}_{\mathcal{B}}$ are the members $v$ of 
$\mathcal{V}_{\mathcal{B}}$ such that $supp(v) = X$;
\item A collection $\{ v_{0}, \ldots, v_{n} \}$ of vertices in $\Delta^{f}_{\mathcal{B}}$ 
span a simplex if and only if:
\begin{enumerate}
\item $|v_{i}| \neq |v_{j}|$ if $i \neq j$;
\item if $|v_{0}| < |v_{1}| < \ldots < |v_{n}|$ and $v_{0} = \{ b_{1}, \ldots, b_{m} \}$,
then 
\[ r_{b_{i}}(v_{0}) = \{ b_{i} \} \leq r_{b_{i}}(v_{1}) \leq \ldots \leq r_{b_{i}}(v_{n}) \]
is a chain in $\mathcal{E}(b_{i})$, for $i = 1, \ldots, m$.
\end{enumerate}
\end{enumerate}
\end{definition}

\begin{definition} \label{definition:ascstar}
(the height function and ascending stars)
For a vertex $v \in \left(\Delta^{f}_{\mathcal{B}}\right)^{0}$, we call $|v|$ the \emph{height of $v$}. The \emph{ascending star of $v$}, denoted $st_{\uparrow}(v)$, is the subcomplex of 
$\Delta^{f}_{\mathcal{B}}$ consisting of all simplices in which $v$ is the vertex of minimum height, and 
the faces of such simplices.
\end{definition}

\begin{definition} \label{definition:poset}
(the ascending star as a partially ordered set; cf. Definition 2.15 from \cite{Farley})
Let $v = \{ b_{1}, \ldots, b_{k} \} \in \left( \Delta^{f}_{\mathcal{B}} \right)^{0}$. We define a partial order 
on $st_{\uparrow}(v)$ as follows:
$u \leq w$ if and only if  $r_{b_{i}}(u) \leq r_{b_{i}}(w)$
for all $b_{i} \in v$, where the latter inequality is with respect to the partial order
on $\mathcal{E}(b_{i})$.
\end{definition}

\begin{theorem} \label{theorem:summary}
(Description of $st_{\uparrow}(v)$; cf. Proposition 2.16 and Theorem 2.22 from \cite{Farley})
Let $v = \{ b_{1}, \ldots, b_{k} \} \in \left( \Delta^{f}_{\mathcal{B}} \right)^{0}$. 
The ascending star $st_{\uparrow}(v)$ is abstractly isomorphic to 
$\mathcal{E}(b_{1}) \times \ldots \times \mathcal{E}(b_{k})$ as a partially ordered set. As a result, there is a homeomorphism
\[ |st_{\uparrow}(v)| \cong \prod_{i=1}^{k} |\mathcal{E}(b_{i})|. \] \qed
\end{theorem}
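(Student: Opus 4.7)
The plan is to write down an explicit poset map $\Phi\colon st_\uparrow(v)\to \mathcal{E}(b_1)\times\cdots\times\mathcal{E}(b_k)$ by $\Phi(u)=(r_{b_1}(u),\ldots,r_{b_k}(u))$ and show it is a bijection whose inverse is also order-preserving. First I would check that $\Phi$ takes values in the product poset: if $u\in st_\uparrow(v)$ is a vertex, then by Definition \ref{definition:ascstar} there is a simplex of $\Delta^f_\mathcal{B}$ containing $\{v,u\}$ with $v$ of minimum height, so Definition \ref{definition:complexes}(2)(b) gives $\{b_i\}=r_{b_i}(v)\le r_{b_i}(u)$ as a chain in $\mathcal{E}(b_i)$ for each $i$. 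In particular, $r_{b_i}(u)\in \mathcal{E}(b_i)$.

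Next I would construct the inverse $\Psi(e_1,\ldots,e_k)=e_1\cup\cdots\cup e_k$. Using the remark after Definition \ref{definition:simpleexpansionsets} (so that $supp(e_i)=supp(b_i)$) together with pairwise disjointness of the $supp(b_i)$, one sees that $\Psi(\vec e)$ is a vertex of $\mathcal{V}_\mathcal{B}$ of total support $supp(v)=X$, hence a vertex of $\Delta^f_\mathcal{B}$. A direct check shows $r_{b_i}(\Psi(\vec e))=e_i$ (elements of $e_j$ for $j\ne i$ have support disjoint from $supp(b_i)$, so they are not counted in the restriction), which yields $\Phi\circ\Psi=\id$ and $\Psi\circ\Phi=\id$. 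Then the pair $\{v,\Psi(\vec e)\}$ is easily verified to span a simplex in $\Delta^f_\mathcal{B}$ with $v$ of minimum height, placing $\Psi(\vec e)$ in $st_\uparrow(v)$. Combined with Definition \ref{definition:poset}, this exhibits $\Phi$ as an isomorphism of posets onto the componentwise-ordered product.

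For the topological conclusion, I would show that $st_\uparrow(v)$, viewed as a simplicial subcomplex of $\Delta^f_\mathcal{B}$, coincides with the order complex of the poset of Definition \ref{definition:poset}. The delicate point is the height-distinctness condition (2)(a) of Definition \ref{definition:complexes}: if $u<w$ in the poset, then some $r_{b_i}(u)<r_{b_i}(w)$ strictly in $\mathcal{E}(b_i)$, and since the non-trivial element of $\mathcal{E}(b_i)$ strictly refines $P(\{b_i\})$, it has strictly larger cardinality than $\{b_i\}$; summing over $i$ gives $|u|<|w|$, so (2)(a) is automatic. Thus $\Phi$ induces a simplicial isomorphism between $st_\uparrow(v)$ and the order complex of $\mathcal{E}(b_1)\times\cdots\times\mathcal{E}(b_k)$, and the product formula for $|st_\uparrow(v)|$ follows from the standard fact that the geometric realization of the order complex of a product of finite posets is naturally homeomorphic to the Cartesian product of the individual realizations.

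The main obstacle I expect is the bookkeeping in the second step—in particular, converting between the ``simplex'' and ``chain'' viewpoints and verifying that $\Psi(\vec e)$ really does sit in $st_\uparrow(v)$ rather than merely in $\Delta^f_\mathcal{B}$. Once the bijection $\Phi\leftrightarrow\Psi$ is properly set up, both the order isomorphism and the homeomorphism fall out formally.
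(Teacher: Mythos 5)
Your proposal is correct, but it is worth noting that the paper itself supplies no argument here: the statement is quoted with a \qed and a pointer to Proposition 2.16 and Theorem 2.22 of \cite{Farley}, where the result is proved for arbitrary expansion sets. Your self-contained argument via the restriction map $\Phi(u)=(r_{b_{1}}(u),\ldots,r_{b_{k}}(u))$ and the union map $\Psi(\vec{e})=e_{1}\cup\cdots\cup e_{k}$ is the natural one and is essentially the specialization of the cited general proof; what it buys is that the reader of this paper need not unwind the machinery of \cite{Farley}, and your observation that condition (2)(a) of Definition \ref{definition:complexes} is automatic (strict inequality in some $\mathcal{E}(b_{i})$ forces $|u|<|w|$ because a proper refinement of $P(\{b_{i}\})$ has cardinality at least $2$) is exactly the point that makes the order-complex identification go through. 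One small step deserves to be made explicit: the identity $r_{b_{i}}(\Psi(\vec{e}))=e_{i}$ gives $\Phi\circ\Psi=\id$ directly, but $\Psi\circ\Phi=\id$ additionally requires that $u=\bigcup_{i}r_{b_{i}}(u)$ for every vertex $u$ of $st_{\uparrow}(v)$, i.e.\ that every $b'\in u$ has support contained in some $supp(b_{i})$. This follows from a short support argument: since $\bigcup_{i}supp(r_{b_{i}}(u))=\bigcup_{i}supp(b_{i})=X$, the nonempty set $supp(b')$ meets $supp(b'')$ for some $b''\in r_{b_{i}}(u)\subseteq u$, and disjointness of supports within $u$ forces $b'=b''$. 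With that inserted, and with the standard homeomorphism between the realization of the order complex of a product of finite posets and the product of the realizations (here just the shuffle triangulation of a cube), your proof is complete.
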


\begin{corollary} \label{corollary:unionofcubes}
(Union of cubes)
If $\mathcal{B}$ be a simple expansion set, then 
$\Delta^{f}_{\mathcal{B}}$ is a union of cubes. 
\end{corollary}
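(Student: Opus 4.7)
The plan is to exploit Theorem \ref{theorem:summary} directly. Since $\mathcal{B}$ is simple, for each $b \in \mathcal{B}$ the set $\mathcal{E}(b)$ has either one element (just $\{b\}$) or two elements, namely $\{b\}$ and a single proper expansion $v$ with $\{b\} \leq v$. Viewed as an order complex, $|\mathcal{E}(b)|$ is therefore either a single point or a closed interval. Consequently, for any vertex $v = \{b_1, \ldots, b_k\}$ of $\Delta^{f}_{\mathcal{B}}$, Theorem \ref{theorem:summary} identifies
\[ |st_{\uparrow}(v)| \cong \prod_{i=1}^{k} |\mathcal{E}(b_i)| \]
as a product of points and closed intervals, which is homeomorphic to a cube (of dimension equal to the number of indices $i$ with $|\mathcal{E}(b_i)| = 2$).

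Next I would observe that every simplex of $\Delta^{f}_{\mathcal{B}}$ lies in at least one ascending star. This is immediate from clause (2)(a) of Definition \ref{definition:complexes}: the vertices of any simplex have pairwise distinct heights, so there is a unique vertex $v_0$ of minimum height among them, and by definition the simplex belongs to $st_{\uparrow}(v_0)$.

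Combining the two observations, $\Delta^{f}_{\mathcal{B}}$ is covered by the subcomplexes $|st_{\uparrow}(v)|$ as $v$ ranges over $\left(\Delta^{f}_{\mathcal{B}}\right)^{0}$, each of which is a cube. Hence $\Delta^{f}_{\mathcal{B}}$ is a union of cubes. I do not anticipate a significant obstacle; the corollary is essentially the translation of Theorem \ref{theorem:summary} into the simple setting, once one notes that the uniqueness-of-minimum-height vertex ensures the ascending stars cover the complex. The real content — and what will matter for later results on non-positive curvature — lies in showing that the cubes from different ascending stars fit together consistently along faces, but for the present statement only the covering property is required.
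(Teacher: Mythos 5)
Your proposal is correct and follows the paper's own argument essentially verbatim: every simplex lies in the ascending star of its unique minimum-height vertex, and Theorem \ref{theorem:summary} identifies each such star with a product of the $|\mathcal{E}(b_i)|$, each of which is a point or a segment by simplicity. Your closing remark that only the covering property is needed here, with the compatibility of cubes deferred to later results, accurately reflects how the paper organizes the subsequent material.
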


\begin{proof}
Definition \ref{definition:complexes} shows that each simplex
$\{ v_{0}, \ldots, v_{m} \} \subseteq \Delta^{f}_{\mathcal{B}}$ is part of the ascending star of its minimal-height vertex. Theorem \ref{theorem:summary} shows that each such ascending star is a product $|\mathcal{E}(b_{1})| \times \ldots
\times |\mathcal{E}(b_{k})|$. The definition of simple expansion set
specifies that each factor in the above product is either a point or a line segment, so the product is a cube.
\end{proof}

\section{Non-positively curved cubical complexes}

In this section, we will show that $\Delta^{f}_{\mathcal{B}}$ is a non-positively curved cubical complex when $\mathcal{B}$ is simple. Under very mild additional assumptions, we will conclude that $\Delta^{f}_{\mathcal{B}}$ is CAT(0). 

\subsection{The cubical complex condition}

The next step is to show that $\Delta^{f}_{\mathcal{B}}$ is a cubical complex.
We will need to take a closer look at the cubes from Corollary \ref{corollary:unionofcubes}.

\begin{definition} \label{definition:cubes} (Abstract cubes)
Let $v_{1}, v_{2} \in \mathcal{V}_{\mathcal{B}}$, where $v_{2} \subseteq v_{1}$
and $|\mathcal{E}(b)| = 2$ for each $b \in v_{2}$. We let $\mathcal{C}(v_{1},v_{2})$ denote the \emph{cube determined by $(v_{1},v_{2})$}.  It is the portion of 
$st_{\uparrow}(v_{1})$ corresponding to 
\[ \prod_{b' \in v_{1}-v_{2}} \{ \{ b' \} \} \times \prod_{b'' \in v_{2}} \mathcal{E}(b'')
\subseteq \prod_{b \in v_{1}} \mathcal{E}(b). \]
Thus, it is the subcomplex of $st_{\uparrow}(v_{1})$ consisting of simplices that involve expansions only at members of $v_{2}$.
\end{definition}

\begin{example} \label{example:Vpt2}
(Cubes in the case of $V$)
We consider an example of a cube in $\Delta^{f}_{\mathcal{B}}$, where $\mathcal{B}$ is the expansion set defined in Example \ref{example:Vpt1} for Thompson's group $V$.

For a pair $[id_{\mid B}, B] \in \mathcal{B}$, we will simply write $B$ for convenience. Thus, we may
 consider the set $\{ B_{0}, B_{10}, B_{11} \}$, which is a vertex in $\Delta^{f}_{\mathcal{B}}$, and the cube 
 \[ \mathcal{C} := \mathcal{C}( \{ B_{0}, B_{10}, B_{11} \}, \{ B_{0}, B_{10} \}).\]
By the definition of $\mathcal{B}$, the sets $\mathcal{E}(B_{0})$ and $\mathcal{E}(B_{10})$ are as follows:
\begin{align*}
\mathcal{E}(B_{0}) &= \{ \{ B_{0} \}, \{ B_{00}, B_{01} \} \}; \\
\mathcal{E}(B_{10}) &= \{ \{ B_{10} \}, \{ B_{100}, B_{101} \} \}. 
\end{align*}
The cube $\mathcal{C}$ is depicted in Figure \ref{figure:1}. The gray vertical line represents a $1$-simplex in $\Delta^{f}_{\mathcal{B}}$ that is not part of the cubical structure.  

\begin{figure}[!h]
\begin{center}
\begin{tikzpicture}
\filldraw[lightgray](2,0) -- (0,2) -- (2,4) -- (4,2) -- cycle;
\draw[black,thick](2,0) -- (0,2);
\draw[black,thick](2,0) -- (4,2);
\draw[black,thick](0,2) -- (2,4);
\draw[black,thick](4,2) -- (2,4);
\draw[gray](2,0) -- (2,4);
\node at (2,4.3){$\{ B_{00}, B_{01}, B_{100}, B_{101}, B_{11} \}$};
\node at (-1.7,2){$\{ B_{00}, B_{01}, B_{10}, B_{11} \}$};
\node at (2,-.3){$\{ B_{0}, B_{10}, B_{11} \}$};
\node at (5.7,2){$\{ B_{0}, B_{100}, B_{101}, B_{11} \}$};
\end{tikzpicture}
\end{center}
\caption{Here a cube in $\Delta^{f}_{\mathcal{B}}$ is pictured, where 
$\mathcal{B}$ is the expansion set for Thompson's group $V$.}
\label{figure:1} 
\end{figure}
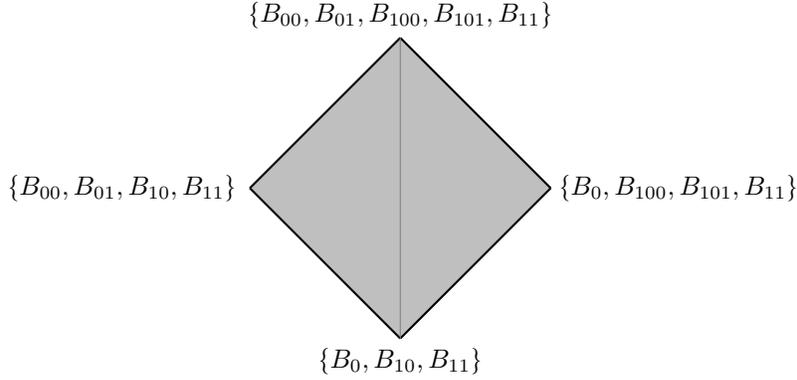
\end{example}

\begin{remark} \label{remark:cubes}
(Direct description of cubes)
Let a cube $\mathcal{C} := \mathcal{C}(v_{1}, v_{2})$ be given. It is useful to think of the members of
$v_{2}$ as \emph{active} elements of $v_{1}$, and the remaining members as \emph{inert}. The active elements have ``on'' and ``off'' positions. The ``on'' position represents an expanded state, while the ``off" position represents an unexpanded state.
If the members of $v_{2}$ are given an (arbitrary) ordering, then we can encode these ``on" and ``off" positions by binary $j$-tuples, where $|v_{2}|=j$. An ``on" position is encoded by $1$ and an ``off" position is encoded by $0$. 

For instance, in the case of Example \ref{example:Vpt2}, we could let $B_{0}$ represent the first coordinate of the $2$-tuple, and $B_{10}$ represent the second coordinate. An ``on" position 
consists of replacing the ball in question by its left and right halves, while an ``off" position leaves the ball intact. Thus, the corners of the square in Figure \ref{figure:1} would be labelled  $(1,1)$, $(0,1)$, $(0,0)$, and $(1,0)$, respectively, reading clockwise from the top vertex. 

The above description of cubes extends in an obvious way to all simple expansion sets. 
\end{remark}

\begin{definition} \label{definition:basin} (Contraction basins)
Let $\mathcal{B}$ be a simple expansion set. For a given $b \in \mathcal{B}$ such that
$|\mathcal{E}(b)|=2$, we let $Bas(b)$ be the unique member of 
$\mathcal{E}(b) - \{ \{b \} \}$. We say that $Bas(b)$ is the \emph{contraction basin of $b$}.
\end{definition}

\begin{remark}
The assignment $b \mapsto Bas(b)$ is not, in general, one-to-one. For instance, consider the example of $V$. Let $a$ denote the transformation of the Cantor set $X$ that alters only the first coordinate, changing $0$ to $1$ and $1$ to $0$. Let $b_{1} = [id_{X},X]$ and
$b_{2} = [a,X]$. By definition, $Bas(b_{1}) = \{ [id_{B_{0}}, B_{0}], [id_{B_{1}}, B_{1}] \}$
and $Bas(b_{2}) = \{ [a_{\mid B_{0}}, B_{0}], [a_{\mid B_{1}},B_{1}] \}$. Now note that
$a_{\mid B_{0}} = \sigma_{0}^{1}$ and $a_{\mid B_{1}} = \sigma_{1}^{0}$. It follows 
that 
\begin{align*}
[a_{\mid B_{0}}, B_{0}] &= [id_{B_{1}}, B_{1}]; \\
[a_{\mid B_{1}}, B_{1}] &= [id_{B_{0}}, B_{0}].
\end{align*}
since (respectively) $id_{B_{1}} \circ \sigma_{0}^{1} = a$ on $B_{0}$ 
and $id_{B_{0}} \circ \sigma_{1}^{0} = a$ on $B_{1}$. Thus, $Bas(b_{1}) = Bas(b_{2})$, although $b_{1} \neq b_{2}$. 

It is not difficult to show that $b \mapsto Bas(b)$ is, in fact, two-to-one in the case of $V$. We will offer a formal in the later discussion of $V$ (Subsection \ref{subsection:V}).
\end{remark}

\begin{lemma} (Intersection lemma) \label{lemma:intersection}
Let $\mathcal{C} = \mathcal{C}(v_{1},v_{2})$ and
$\mathcal{C}' = \mathcal{C}(v'_{1},v'_{2})$ be cubes.
If $b \in v_{1}$ and $b \in Bas(b')$ for some $b' \in v'_{2}$, 
then $b \in w$, for all $w \in \mathcal{C} \cap \mathcal{C}'$.
\end{lemma}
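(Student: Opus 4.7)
The plan is to fix an arbitrary $w \in \mathcal{C} \cap \mathcal{C}'$ and show that $b \in w$ by a case analysis on the ``state'' of $b'$ in $w$ with respect to the cube $\mathcal{C}'$. The key structural facts I will use are: (i)~a vertex $w \in \mathcal{V}_{\mathcal{B}}$ is a set of elements of $\mathcal{B}$ with pairwise disjoint supports; (ii)~since $w \in st_{\uparrow}(v_1')$, the restriction $r_{b'}(w)$ lies in $\mathcal{E}(b')$, so $supp(r_{b'}(w)) = supp(b')$; and similarly (iii)~since $w \in st_{\uparrow}(v_1)$, $r_b(w) \in \mathcal{E}(b)$, so $supp(r_b(w)) = supp(b)$.

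Because $b' \in v_2'$, property (1) of Definition \ref{definition:simpleexpansionsets} forces $|\mathcal{E}(b')| = 2$, so $r_{b'}(w)$ is either $\{b'\}$ or $Bas(b')$. In the second case, $Bas(b') \subseteq w$, so $b \in w$ immediately, which is what we want. The whole content of the lemma therefore reduces to ruling out the case $r_{b'}(w) = \{b'\}$, i.e.\ $b' \in w$.

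Suppose toward a contradiction that $b' \in w$. By fact (i), $supp(b')$ is disjoint from $supp(c)$ for every $c \in w$ with $c \neq b'$. Now examine $r_b(w)$: by (iii), the elements of $r_b(w)$ have supports contained in $supp(b)$, and together their supports equal $supp(b)$. Since $b \in Bas(b')$, property (3) of Definition \ref{definition:simpleexpansionsets} (applied to $b'$) says $P(Bas(b'))$ properly refines $P(\{b'\}) = \{supp(b')\}$, so in particular $supp(b) \subsetneq supp(b')$. Thus any $c \in r_b(w)$ has $supp(c) \subseteq supp(b) \subsetneq supp(b')$, which forces $c \neq b'$ (they have unequal supports) yet also $supp(c) \cap supp(b') = supp(c) \neq \emptyset$ (provided $c$ has nonempty support, which is required of members of $\mathcal{B}$). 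This contradicts the disjointness conclusion of fact (i). Hence $b' \notin w$, and we conclude $b \in Bas(b') \subseteq w$.

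I expect the only mildly subtle point to be invoking precisely the correct form of the partial-order description of $st_{\uparrow}(v_1')$ from Definition \ref{definition:poset} and Theorem \ref{theorem:summary} in order to justify the dichotomy $r_{b'}(w) \in \{\{b'\}, Bas(b')\}$; once that is in hand, the argument is a direct application of the pairwise-disjoint-support condition built into the definition of a vertex, combined with the observation that $b \in Bas(b')$ forces $supp(b) \subsetneq supp(b')$.
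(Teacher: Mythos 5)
Your proof is correct and follows essentially the same route as the paper's: both play the dichotomy $r_{b'}(w)\in\{\{b'\},Bas(b')\}$ off against the fact that $r_{b}(w)$ is a nonempty subset of $w$ whose elements are supported inside $supp(b)\subsetneq supp(b')$, and both derive the contradiction from the disjoint-support condition on vertices. One small correction: the equality $|\mathcal{E}(b')|=2$ comes from Definition \ref{definition:cubes} (or simply from $Bas(b')$ being defined), not from property (1) of Definition \ref{definition:simpleexpansionsets}, which only gives the inequality $|\mathcal{E}(b')|\leq 2$.
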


\begin{proof}
We assume $b \in v_{1}$ and $b \in Bas(b')$ for some $b' \in v'_{2}$. 
Let $w \in \mathcal{C} \cap \mathcal{C}'$ be arbitrary, and suppose for a contradiction that $b \not \in w$. Since $w \in \mathcal{C}'$, we must have 
either $\{ b' \} \subseteq w$ or $Bas(b') \subseteq w$. The latter is ruled out
because $b \in Bas(b') - w$, so $b' \in w$. Similarly, since $b \in v_{1}$ and $w \in \mathcal{C}$, it must be that $\{ b \} \subseteq w$ or $Bas(b) \subseteq w$. The former possibility is ruled out by hypothesis, so 
$Bas(b) \subseteq w$. 

Thus, it must be that $Bas(b)$ and $\{ b' \}$ are both subsets of $w$. However, if $b'' \in Bas(b)$, then 
\[ supp(b'') \subsetneq supp(Bas(b)) = supp(b)  \subsetneq supp(b'), \]
which gives two distinct members $b'$ and $b''$ of $w$ with overlapping support. This is a contradiction.
\end{proof}

\begin{proposition}
(Intersection of cubes is a cube)
If $\mathcal{C}(v_{1},v_{2})$ and $\mathcal{C}(v'_{1},v'_{2})$ are cubes,
and $\mathcal{C}(v_{1},v_{2}) \cap \mathcal{C}(v'_{1},v'_{2}) \neq \emptyset$, then
$\mathcal{C}(v_{1},v_{2}) \cap \mathcal{C}(v'_{1},v'_{2})$ is a cube $\mathcal{C}(v_{3},v_{4})$,
for some $v_{3}$ and $v_{4}$.
\end{proposition}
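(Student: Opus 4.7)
My plan is to describe the intersection explicitly by parameterizing the vertices of each cube and then identifying the vertices of the intersection with a common interval in both parameterizations. By Remark \ref{remark:cubes}, every vertex of $\mathcal{C}(v_1, v_2)$ has the unique form $w_T := (v_1 \setminus T) \cup \bigsqcup_{b \in T} Bas(b)$ for some $T \subseteq v_2$, and analogously $w'_U$ for $U \subseteq v'_2$ on the other side. Assuming $W := \mathcal{C}(v_1, v_2) \cap \mathcal{C}(v'_1, v'_2)$ is non-empty, each $w \in W$ carries both kinds of label, so there is a well-defined map $\phi$ sending $T(w)$ to $U(w)$.

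The heart of the argument, and the step I expect to be the main obstacle, is the following meet (and join) compatibility claim: if $w_{T_1} = w'_{U_1}$ and $w_{T_2} = w'_{U_2}$ lie in $W$, then $w_{T_1 \cap T_2} = w'_{U_1 \cap U_2}$, and similarly for unions. I would prove this element-by-element from the explicit formula, case-splitting on whether a candidate element $c$ appears on the $\mathcal{C}(v_1, v_2)$-side via $v_1 \setminus T_i$ or via some $Bas(b)$ with $b \in T_i$, and similarly from the $\mathcal{C}(v'_1, v'_2)$-side. The main tool is the support-disjointness condition built into every vertex of $\mathcal{V}_{\mathcal{B}}$: whenever two purportedly-present elements would have nested or overlapping supports, we reach an immediate contradiction. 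Lemma \ref{lemma:intersection} is exactly what dispatches the case in which an element of $v_1$ would otherwise have to coexist with a strict sub-support element coming from a basin of the other cube.

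Given meet and join compatibility, $W$ becomes a finite sublattice of each cube's Boolean lattice, so it has a unique minimum $v_3 = w_{T_0} = w'_{U_0}$ and a unique maximum $w_3 = w_{T_1} = w'_{U_1}$. I would then observe $T_1 \setminus T_0 = U_1 \setminus U_0$: for $b \in T_1 \setminus T_0$ we have $b \in v_3$ but $b \notin w_3$, and the $\mathcal{C}(v'_1, v'_2)$-description of $v_3$ and $w_3$ combined with support-disjointness rules out $b$ being a basin-member and forces $b \in v'_1$ with $b \in U_1 \setminus U_0$; the reverse inclusion is symmetric. Setting $v_4 := T_1 \setminus T_0$ yields $v_4 \subseteq v_1 \setminus T_0 \subseteq v_3$, so the pair $(v_3, v_4)$ is an admissible cube label, noting that $v_4 \subseteq v_2$ ensures $|\mathcal{E}(b)| = 2$ for each $b \in v_4$.

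To conclude I would match $\mathcal{C}(v_3, v_4)$ with $W$. For any $T'' \subseteq v_4$, a direct calculation using $T'' \cap T_0 = \emptyset$ and support-disjointness shows $(v_3 \setminus T'') \cup \bigsqcup_{b \in T''} Bas(b) = w_{T_0 \cup T''} = w'_{U_0 \cup T''}$, so every vertex of $\mathcal{C}(v_3, v_4)$ lies in both ambient cubes and hence in $W$. Conversely, any $w \in W$ satisfies $v_3 \leq w \leq w_3$ in $\mathcal{C}(v_1, v_2)$'s poset by extremality of $v_3$ and $w_3$, so it has the prescribed form. The simplicial structures coincide because a chain of vertices inside the Boolean sub-interval $[T_0, T_1]$ is a chain in the ascending-star poset of either ambient cube and of $\mathcal{C}(v_3, v_4)$, which sits inside the ascending star of $v_3$ with its standard simplicial subdivision.
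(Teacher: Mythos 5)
Your proposal is correct and arrives at the same cube, but by a genuinely different organization of the argument. Both proofs rely on the same two ingredients --- the injective labelling of a cube's vertices by subsets of its active set (Remark \ref{remark:cubes}) and support-disjointness within a vertex, reinforced by Lemma \ref{lemma:intersection} --- but the paper works only from the bottom of the intersection: it fixes a minimal-height vertex $\hat{v}$, writes it in both coordinate systems as $\hat{v} = (v_{1} - \tilde{v}_{2}) \cup \bigcup_{b \in \tilde{v}_{2}} Bas(b)$, posits the answer $\mathcal{C}\left(\hat{v}, (v_{2} - \tilde{v}_{2}) \cap (v'_{2} - \tilde{v}'_{2})\right)$, and proves the forward inclusion by showing that every vertex of the intersection expands at least the set $\tilde{v}_{2}$ and then appealing to uniqueness of expansions. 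You instead prove closure of the label sets under meets \emph{and} joins, extract a unique minimum and maximum, and take the active set to be the difference of the two extreme labels; extremality then gives the forward inclusion essentially for free, at the price of the join half of the lattice claim and the identity between the unprimed and primed label differences, neither of which the paper needs. The two descriptions of the active set agree a posteriori, and your version has the merit of making the order-theoretic structure of the intersection (a Boolean interval) explicit. One small caution: in your meet/join case analysis, Lemma \ref{lemma:intersection} only places the problematic element in every vertex already known to lie in the intersection, not in the candidate meet or join vertex you are trying to construct; the case of an element of $v_{1}$ sitting in a basin of the other cube is actually closed by a direct nested-support contradiction inside one of the vertices already known to be common. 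That argument does go through, so this is a point of attribution rather than a gap.
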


\begin{proof}
We write $\mathcal{C}$ and $\mathcal{C}'$ for $\mathcal{C}(v_{1},v_{2})$ and $\mathcal{C}(v'_{1},v'_{2})$, respectively. 
Assume that $\mathcal{C} \cap \mathcal{C}' \neq \emptyset$. Let $
\hat{v}$ be a vertex of minimal height in the intersection. We can write
\begin{align*}
\hat{v} &= (v_{1} - \tilde{v}_{2}) \cup \left( \cup_{b \in \tilde{v}_{2}} Bas(b) \right); \\
\hat{v} &= (v'_{1} - \tilde{v}'_{2}) \cup \left( \cup_{b' \in \tilde{v}'_{2}} Bas(b') \right),
\end{align*}
where $\tilde{v}_{2} \subseteq v_{2}$ and $\tilde{v}'_{2} \subseteq v'_{2}$.

We claim that 
\[ \mathcal{C} \cap \mathcal{C}' = \mathcal{C}\left( \hat{v}, \left( v_{2} - \tilde{v}_{2} \right) \cap \left( v'_{2} - \tilde{v}'_{2} \right)\right). \]
Indeed, we will now prove the reverse inclusion.
It follows directly from the definition of cubes that
\[ \mathcal{C}\left( \hat{v}, \left( v_{2} - \tilde{v}_{2} \right) \cap \left( v'_{2} - \tilde{v}'_{2} \right)\right) \subseteq \mathcal{C}\left( \hat{v}, \left( v_{2} - \tilde{v}_{2} \right)\right).\] Next, we note that each member of the latter cube is a member of $\mathcal{C}$, since any vertex that may obtained  from $\hat{v}$ by expanding some subset of $w' \subseteq v_{2} - \tilde{v}_{2}$ may be 
obtained from $v_{1}$ by expanding some subset of $v_{2}$: starting with $v_{1}$, one first expands each member of $\tilde{v}_{2}$ (resulting in $\hat{v}$), and then expands $w'$. It follows that
\[ \mathcal{C}\left( \hat{v}, \left( v_{2} - \tilde{v}_{2} \right)\right) \subseteq \mathcal{C}. \]
One similarly shows that
\[ \mathcal{C}\left( \hat{v}, \left( v_{2} - \tilde{v}_{2} \right) \cap \left( v'_{2} - \tilde{v}'_{2} \right)\right) 
\subseteq \mathcal{C}\left( \hat{v}, \left( v'_{2} - \tilde{v}'_{2} \right)\right) \subseteq \mathcal{C}', \]
proving the reverse inclusion, as desired.

Let us now establish the forward inclusion. First, note that $\tilde{v}_{2} \cap \tilde{v}'_{2} = \emptyset$. Indeed, if $b'' \in 
\tilde{v}_{2} \cap \tilde{v}'_{2}$, we could write
\begin{align*}
w &= \left( v_{1} - \tilde{v}_{2} \right) \cup \{ b'' \} \cup \left( \cup_{b \in \tilde{v}_{2} - \{ b'' \}} Bas(b) \right); \\
w &= \left(v'_{1} - \tilde{v}'_{2} \right) \cup \{ b'' \} \cup \left( \cup_{b' \in \tilde{v}'_{2} - \{ b'' \}} Bas(b') \right),
\end{align*}
which exhibits a vertex $w \in \mathcal{C} \cap \mathcal{C}'$ such that $|w| < |\hat{v}|$. This contradicts the minimality of $|\hat{v}|$. 

Now let $u$ be an arbitrary vertex of $\mathcal{C} \cap \mathcal{C}'$. We 
can write
\begin{align*}
u &= (v_{1} - v_{2}) \cup B_{1} \cup \ldots \cup B_{k}, \\
u &= (v'_{1} - v'_{2}) \cup B'_{1} \cup \ldots \cup B'_{\ell}
\end{align*}
where, in the first equation,
 $v_{2} = \{ b_{1}, \ldots, b_{k} \}$ and 
each $B_{i} \in \mathcal{E}(b_{i})$ (i.e., $B_{i} = \{ b_{i} \}$
or $Bas(b_{i})$).
Similarly, in the second equation, 
$v'_{2} = \{ b'_{1}, \ldots, b'_{\ell} \}$ and 
each $B'_{i} \in \mathcal{E}(b'_{i})$.

We would now like to show that 
\[ E_{\mathcal{C},u} := \{ b_{i} \in v_{2} \mid B_{i} = Bas(b_{i}) \} \supseteq \tilde{v}_{2}. \]
Indeed, suppose for a contradiction that $b \in \tilde{v}_{2} - E_{\mathcal{C},u}$. Since $b \in \tilde{v}_{2}$, we must have $Bas(b) \subseteq \hat{v}$. In particular, $b \not \in \hat{v}$. However, since $b \in v_{2} - E_{\mathcal{C},u}$,
it must be that $\{ b \} \subseteq u$, so $b \in u$. Also, Lemma \ref{lemma:intersection} and the facts that $b \not \in \hat{v}$ and $b \in v_{2} \subseteq v_{1}$ imply that $b \not \in Bas(b')$, for any $b' \in v'_{2}$. 

Consider now the second representation of $u$, given above. Since $b \in u$, it must be that $b \in v'_{1} - v'_{2}$, or 
$b \in B'_{j}$, for some $j$. The first possibility is ruled out by the fact that $v'_{1} - v'_{2} \subseteq v'_{1} - \tilde{v}'_{2}$, and, thus, were $b \in v'_{1} - v'_{2}$, we would have $b \in \hat{v}$, which we know is not true. Thus, $b \in B'_{j}$ for some $j$. It follows that $B'_{j} = \{ b'_{j} \}$, since the possibility $B'_{j} = Bas(b'_{j})$ is ruled out by the fact that $b \not \in Bas(b'_{j})$. Thus $b = b'_{j}$; i.e., $b \in v'_{2} \subseteq v'_{1}$. Furthermore, since $b \in \tilde{v}_{2}$, 
$b \not \in \tilde{v}'_{2}$. It follows that $b \in v'_{1} - \tilde{v}'_{2} \subseteq \hat{v}$. Thus, $b \in \hat{v}$, a contradiction. This proves that $\tilde{v}_{2} \subseteq E_{\mathcal{C},u}$, as desired.  

It follows that any given vertex $u \in \mathcal{C} \cap \mathcal{C}'$ may be obtained from $\hat{v}$ by some combination of expansions from $v_{2} - \tilde{v}_{2}$. 
A similar line of reasoning shows that the same $u$ may be obtained from 
$\hat{v}$ by some combination of expansions  from $v'_{2} - \tilde{v}'_{2}$. However, the expansions from $\hat{v}$ that lead to $u$ are unique, and so all must occur from the
set $(v_{2}-\tilde{v}_{2}) \cap (v'_{2} - \tilde{v}'_{2})$. 
\end{proof}

\begin{corollary}
($\Delta^{f}_{\mathcal{B}}$ is a cubical complex)
The complex $\Delta^{f}_{\mathcal{B}}$ is a cubical complex, with cubes $\mathcal{C}(v_{1},v_{2})$,
where $(v_{1},v_{2})$ runs over all pairs satisfying the conditions from Definition \ref{definition:cubes}. \qed
\end{corollary}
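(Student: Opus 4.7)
The plan is to verify the two axioms of a cubical complex: (a) every face of a cube in the family is again a cube in the family, and (b) the intersection of any two cubes is either empty or a common face. Proposition 3.6 already establishes that nonempty intersections are cubes, so once (a) is settled we need only observe that the cube produced by that proposition is a face of each of the two original cubes.

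For (a), I would classify the faces of $\mathcal{C}(v_{1},v_{2})$ via the on/off encoding of Remark \ref{remark:cubes}. A face corresponds to freezing some subset $v'' \subseteq v_{2}$ of the active elements: let $v''_{on} \subseteq v''$ be those frozen in the expanded state and $v''_{off} = v'' - v''_{on}$ be those frozen unexpanded. Define
\[ v_{3} = (v_{1} - v''_{on}) \cup \bigcup_{b \in v''_{on}} Bas(b), \qquad v_{4} = v_{2} - v''. \]
Because $supp(Bas(b)) = supp(b)$ for each active $b$, the vertex $v_{3}$ has the same support as $v_{1}$, so $v_{3}$ is again a vertex of $\Delta^{f}_{\mathcal{B}}$ with $v_{4} \subseteq v_{3}$ and $|\mathcal{E}(b)| = 2$ for all $b \in v_{4}$. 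Unpacking Definition \ref{definition:cubes}, the face under consideration is precisely $\mathcal{C}(v_{3}, v_{4})$. This shows that every face of a cube is a cube of the prescribed form.

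For (b), apply Proposition 3.6 to two cubes with nonempty intersection, writing
\[ \mathcal{C}(v_{1},v_{2}) \cap \mathcal{C}(v'_{1},v'_{2}) = \mathcal{C}\bigl(\hat{v},\, (v_{2} - \tilde{v}_{2}) \cap (v'_{2} - \tilde{v}'_{2})\bigr), \]
where $\hat{v}$ is obtained from $v_{1}$ by expanding each $b \in \tilde{v}_{2}$. Taking $v''_{on} = \tilde{v}_{2}$ and $v''_{off} = (v_{2} - \tilde{v}_{2}) - \bigl( (v_{2} - \tilde{v}_{2}) \cap (v'_{2} - \tilde{v}'_{2}) \bigr)$ in the face classification above exhibits this intersection as a face of $\mathcal{C}(v_{1},v_{2})$, and a symmetric assignment from the side of $v'_{1}$ exhibits it as a face of $\mathcal{C}(v'_{1},v'_{2})$. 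Hence the intersection is a common face, as required.

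The main obstacle is essentially bookkeeping: one must verify carefully that the on/off description of Remark \ref{remark:cubes} genuinely parametrizes the faces of $\mathcal{C}(v_{1},v_{2})$ compatibly with Definition \ref{definition:cubes}, and that the derived vertices $v_{3}$ indeed lie in $\left(\Delta^{f}_{\mathcal{B}}\right)^{0}$ (i.e., have full support in $X$). Both points reduce to the standing observation, recorded just after Definition \ref{definition:simpleexpansionsets}, that expansions preserve support.
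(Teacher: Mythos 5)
Your proof is correct and follows the same route the paper intends: the corollary carries no separate argument because it is meant to follow immediately from Corollary \ref{corollary:unionofcubes} and the preceding proposition that intersections of cubes are cubes, which are exactly the ingredients you use. Your explicit classification of faces via the on/off encoding, and your check that the intersection cube $\mathcal{C}\bigl(\hat{v}, (v_{2}-\tilde{v}_{2}) \cap (v'_{2}-\tilde{v}'_{2})\bigr)$ is in fact a \emph{common face} of the two original cubes, correctly supplies bookkeeping that the paper leaves implicit.
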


\subsection{The link condition}

We will now verify that Gromov's well-known link condition is satisfied by the complexes $\Delta^{f}_{\mathcal{B}}$. We assume that the reader is acquainted with links in cubical complexes and with the link condition. Bridson and Haefliger \cite{BH} is a standard source for both.

\begin{proposition} (Non-positive curvature) \label{proposition:nonpos}
The complex $\Delta^{f}_{\mathcal{B}}$, endowed with the cubes $\mathcal{C}$ from Definition
\ref{definition:cubes}, is a cubical complex satisfying Gromov's link condition.
\end{proposition}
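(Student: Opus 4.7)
The plan is to verify Gromov's link condition directly: for every vertex $v \in \Delta^{f}_{\mathcal{B}}$, the link $\lk(v)$ in the cubical structure is a flag simplicial complex. First I would classify the edges emanating from $v$. From Corollary~\ref{corollary:unionofcubes} and Definition~\ref{definition:cubes}, every such edge is either an \emph{ascending edge} at some $b \in v$ with $|\mathcal{E}(b)| = 2$, having opposite endpoint $(v - \{b\}) \cup Bas(b)$, or a \emph{descending edge} at some $b'$ with $Bas(b') \subseteq v$, having opposite endpoint $(v - Bas(b')) \cup \{b'\}$. The vertices of $\lk(v)$ are precisely these edges.

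Next I would determine when two edges at $v$ span a square of the cubical structure. Direct inspection of Definition~\ref{definition:cubes} gives three cases. Two ascending edges at distinct $b_{1}, b_{2} \in v$ always span the square $\mathcal{C}(v, \{b_{1}, b_{2}\})$. Two descending edges at $b'_{1}, b'_{2}$ span a square if and only if $Bas(b'_{1}) \cap Bas(b'_{2}) = \emptyset$, in which case the square is $\mathcal{C}(v_{1}, \{b'_{1}, b'_{2}\})$ where $v_{1} := (v - (Bas(b'_{1}) \cup Bas(b'_{2}))) \cup \{b'_{1}, b'_{2}\}$. An ascending edge at $b$ and a descending edge at $b'$ span a square if and only if $b \not\in Bas(b')$, in which case the square is $\mathcal{C}((v - Bas(b')) \cup \{b'\}, \{b, b'\})$. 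The necessity of each condition comes from the pairwise-disjoint-supports requirement built into the definition of a vertex (Definition~\ref{definition:simpleexpansionsets}).

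Given these compatibility criteria, the flag property follows by an explicit construction. Suppose $e_{1}, \ldots, e_{n}$ are edges at $v$ that are pairwise joined in $\lk(v)$, made up of ascending edges at $b_{1}, \ldots, b_{k} \in v$ and descending edges at $b'_{1}, \ldots, b'_{\ell}$. Pairwise compatibility yields that $Bas(b'_{1}), \ldots, Bas(b'_{\ell})$ are pairwise disjoint subsets of $v$, and that no $b_{i}$ lies in any $Bas(b'_{j})$. I would then set
\[ v_{1} := \left(v - \bigcup_{j=1}^{\ell} Bas(b'_{j})\right) \cup \{b'_{1}, \ldots, b'_{\ell}\}, \qquad v_{2} := \{b_{1}, \ldots, b_{k}, b'_{1}, \ldots, b'_{\ell}\}. \]
The disjointness of basins makes $v_{1}$ a valid vertex, and the remaining conditions give $v_{2} \subseteq v_{1}$. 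Then $\mathcal{C}(v_{1}, v_{2})$ is an $n$-cube having $v$ as the corner obtained by expanding each of $b'_{1}, \ldots, b'_{\ell}$, and $e_{1}, \ldots, e_{n}$ are precisely the $n$ edges of this cube emanating from $v$. This provides the required $(n-1)$-simplex in $\lk(v)$.

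I expect the main obstacle to be the three-way case analysis for pairwise square-compatibility, especially the mixed ascending/descending case, where one must isolate the right condition on $b$ and $Bas(b')$. Once this is established, assembling the $n$-cube from the pairwise data is essentially algebraic bookkeeping, and the flag condition follows immediately.
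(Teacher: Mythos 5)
Your proposal is correct and follows essentially the same route as the paper: both arguments reduce pairwise adjacency in $lk(v)$ to disjointness of the associated expansion/contraction basins (your three-case square criterion is just a more explicit packaging of this), and both then realize the clique as the link of $v$ in a single cube $\mathcal{C}(v_{1},v_{2})$ obtained by performing all contractions at once and marking the relevant elements as active.
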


\begin{proof}
Let $v \in \left(\Delta^{f}_{\mathcal{B}}\right)^{0}$. Let us assume that $v_{1}, \ldots, v_{m}$ are vertices in $lk(v)$ that
are pairwise connected by edges. Since each $v_{j}$ is connected to $v$ by an edge, it must be that each $v_{j}$ is the result of replacing a single $b_{j} \in v$ by 
$Bas(b_{j})$, or there is some $\hat{b} \in \mathcal{B}$ such that $v_{j}$ is the result of replacing the contraction basin $Bas(\hat{b}) \subseteq v$ by 
$\hat{b}$. In the former case, we call $\{ b_{j} \}$ an \emph{expansion basin} for $v_{j}$ relative to $v$. In the latter case, we say that $Bas(\hat{b})$ is the \emph{contraction basin} for
$v_{j}$ relative to $v$. Collectively, we refer to these as \emph{basins} for the $v_{j}$ relative to $v$. 

Next, we note that the basins for the $v_{j}$ are pairwise disjoint.
Indeed, consider two vertices $v_{i}$ and $v_{j}$ in $lk(v)$ that, by hypothesis, are connected by an edge in $lk(v)$. It follows that $v_{i}$, $v_{j}$, and $v$ are corners of a two-cube $\mathcal{C}(v',v'')$, where $v_{i}$ and $v_{j}$ are adjacent to $v$. 
Since $\mathcal{C}(v',v'')$ is two-dimensional, it must be that  $|v''| = 2$. We can let
\[ v' = \{ b'_{1}, \ldots, b'_{k} \}, \]
where $v'' = \{ b'_{1}, b'_{2} \}$, without loss of generality. Now note that 
\[ v = \{ b'_{3}, \ldots, b'_{k} \} \cup B'_{1} \cup B'_{2}, \]
where $B'_{i}$ is either $\{ b'_{i} \}$ or $Bas(b'_{i})$, for $i=1,2$. The sets $B'_{i}$ are therefore disjoint (for instance, because their supports are disjoint). By the definition of cubes, $v_{i}$ and $v_{j}$ are obtained from $v$ by expanding or contracting at one or the other of $B'_{1}$ and $B'_{2}$. If $B'_{i} = \{ b'_{i} \}$, then the operation in question is an expansion, while if
$B'_{i} = Bas(b'_{i})$, then only the contraction is possible. We note, therefore, that the basins for $v_{i}$ and $v_{j}$ are the sets $B'_{1}$ and $B'_{2}$. It follows that the basins for $v_{i}$ and $v_{j}$ are disjoint.

Finally, we consider the cube $\mathcal{C}(u_{1},u_{2})$, where $u_{1}$ is the result of simultaneously applying contractions to the contraction basins of the various $v_{j}$,
and $u_{2}$ is the subset of $u_{1}$ consisting of all of the various $b$ at the bottoms of the contraction basins for the $v_{j}$ and all of the $b$ in expansion basins. The cube
$\mathcal{C}(u_{1},u_{2})$ represents an $(m-1)$-simplex spanned by 
$\{ v_{1}, \ldots, v_{m} \}$ in $lk(v)$.
\end{proof}

\subsection{A sufficient condition for CAT(0)}

We next turn to the sufficient condition for $\Delta^{f}_{\mathcal{B}}$ to be a CAT(0) cubical complex.

\begin{definition} (Ascending path partial order) \label{definition:ascending}
Let $v_{1}, v_{2} \in \left(\Delta^{f}_{\mathcal{B}}\right)^{0}$. An 
\emph{ascending path from $v_{1}$ to $v_{2}$} is an edge-path
\[ e_{0}, \ldots, e_{m} \]
such that $\iota(e_{0}) = v_{1}$, $\tau(e_{m}) = v_{2}$, and, for $i=0,1, \ldots, m$,
$|\iota(e_{i})| < |\tau(e_{i})|$.

We write $v_{1} \preceq v_{2}$ if there is an ascending path from $v_{1}$ to $v_{2}$.
It is clear that $\preceq$ is a partial order on the vertices of $\Delta^{f}_{\mathcal{B}}$.
\end{definition}

\begin{definition} (Directed set) \label{definition:directed}
A partially ordered set $(P,\leq)$ is a \emph{directed set}
if any two members of $P$ have a common upper bound with respect to $\leq$.
\end{definition}

\begin{definition} (Relative ascending links) \label{definition:relative}
Let $u', u'' \in \mathcal{V}_{\mathcal{B}}$. We write 
$u' \preceq u''$ if there is a sequence 
\[ u' = v_{0}, v_{1}, v_{2}, \ldots, v_{m} = u'' \]
such that, for $i=1, \ldots, m$, $v_{i}$ may be obtained from $v_{i-1}$ by replacing some
$b' \in v_{i-1}$ with $Bas(b')$; i.e., 
\[ v_{i} = \left( v_{i-1} - \{ b' \} \right) \cup Bas(b'), \]
for some $b' \in v_{i-1}$. 

Let $b \in \mathcal{B}$. For $v \in \mathcal{V}_{\mathcal{B}}$, we let
 $lk_{\uparrow}(\{ b \}, v)$ be the subcomplex of $\mathcal{E}(b)$ spanned
 by
 \[ \{ u \in \mathcal{E}(b) \mid u \neq \{ b \} \text { and }
u \preceq v \}. \]
The complex $lk_{\uparrow}(\{ b \}, v)$ is called the \emph{ascending link of $b$ relative to $v$}.
\end{definition}

\begin{remark} \label{remark:defremark}
The definition of $\preceq$ in Definition \ref{definition:relative}
simply extends the one from Definition \ref{definition:ascending}. The sole difference is that, in Definition \ref{definition:relative}, we are not requiring the vertices in question to be \emph{full support} vertices; i.e., members of $\left(\Delta^{f}_{\mathcal{B}}\right)^{0}$.
\end{remark}

\begin{remark} \label{remark:ascending}
(Relative ascending links in the simple case)
We claim that, when $\mathcal{B}$ is simple, the relative ascending link
$lk_{\uparrow}(\{ b \}, v)$ is either empty or a point, and the latter holds exactly when
$\{ b \} \precneq v$. Indeed, suppose that $\{ b \} \precneq v$. We can find a sequence
\[ \{ b \} = v_{0}, v_{1}, \ldots, v_{m} = v, \]
where $m \geq 1$ (since $\{ b \} \neq v$). It follows from Definition \ref{definition:relative} that $v_{1} = Bas(b)$, and therefore $Bas(b) \preceq v$. Thus,
$lk_{\uparrow}(\{ b \}, v)$ is the simplicial complex with the single vertex $Bas(b)$.
If $\{ b \} = v$, then it follows directly that $lk_{\uparrow}(\{ b \}, v)$ is empty. 
If $\{ b \} \not \preceq v$, then a sequence of the above kind cannot exist, so there is also 
no sequence of the form
\[ Bas(b) = v_{0}, v_{1}, \ldots, v_{m} = v. \]
Thus, $lk_{\uparrow}(\{ b \}, v)$ is again empty. This proves the claim.
\end{remark}

\begin{theorem} \label{theorem:main} (CAT(0) cubical complexes)
Let $\mathcal{B}$ be a simple expansion set. If the vertices of $\Delta^{f}_{\mathcal{B}}$
are a directed set with respect to $\preceq$, then $\Delta^{f}_{\mathcal{B}}$ is 
a CAT(0) cubical complex with respect to the cubes from Definition \ref{definition:cubes}.
\end{theorem}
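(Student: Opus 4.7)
By Proposition \ref{proposition:nonpos} we already know that $\Delta^{f}_{\mathcal{B}}$ is a non-positively curved cubical complex, so by the standard characterization of CAT(0) cube complexes (the Cartan--Hadamard theorem combined with Gromov's link condition, as in \cite{BH}) it suffices to prove that $\Delta^{f}_{\mathcal{B}}$ is simply connected. I plan to prove the stronger statement that $\Delta^{f}_{\mathcal{B}}$ is contractible, by exhibiting it as a directed union of contractible subcomplexes.

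For each vertex $w \in (\Delta^{f}_{\mathcal{B}})^{0}$, let $\tilde{D}_{w}$ denote the full subcomplex of $\Delta^{f}_{\mathcal{B}}$ spanned by $\{v : v \preceq w\}$. Showing that $\Delta^{f}_{\mathcal{B}} = \bigcup_{w} \tilde{D}_{w}$ as a directed union is essentially automatic: any simplex $\{v_{0}, \ldots, v_{n}\}$ ordered by height contains the edge $\{v_{0}, v_{n}\}$, giving $v_{0} \preceq v_{n}$ and hence $v_{i} \preceq v_{n}$ for every $i$, so the whole simplex lies in $\tilde{D}_{v_{n}}$. The directed-set hypothesis on $\preceq$, extended inductively from pairs to finite subsets, shows that any finite collection of vertices has a common upper bound $w$, so $\tilde{D}_{w_{1}} \cup \cdots \cup \tilde{D}_{w_{k}} \subseteq \tilde{D}_{w}$; since compact subsets of a CW complex lie in finite subcomplexes, any compact subset of $\Delta^{f}_{\mathcal{B}}$ lies in a single $\tilde{D}_{w}$.

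The main step -- and the main obstacle -- is to show that each $\tilde{D}_{w}$ is contractible. The subtlety is that $\tilde{D}_{w}$ is generally not a single cube: expanding an element $b$ of a vertex $v \preceq w$ introduces the elements of $Bas(b)$, which may themselves become expandable on the way to $w$, so $\tilde{D}_{w}$ has the combinatorial shape of a ``tree of cubes.'' I would tackle this by constructing a discrete Morse function on $\tilde{D}_{w}$ (in the sense of Forman) with a unique critical cell at the vertex $\{w\}$, so that $\tilde{D}_{w}$ collapses onto $\{w\}$. Fix a well-ordering of $\mathcal{B}$; the matching should pair each non-critical cube with a canonically chosen face or coface obtained by toggling the expansion status of the smallest ``expandable'' element of the cube, where expandable means that the resulting vertex remains $\preceq w$. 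Acyclicity of such a matching follows from the observation that along an alternating path the index of the smallest expandable element must strictly decrease in the fixed well-ordering, which cannot happen indefinitely. Granted contractibility of each $\tilde{D}_{w}$, any map $S^{n} \to \Delta^{f}_{\mathcal{B}}$ factors through a single $\tilde{D}_{w}$ and is null-homotopic, so $\Delta^{f}_{\mathcal{B}}$ is weakly contractible, hence contractible as a CW complex; in particular it is simply connected, and combined with non-positive curvature this yields CAT(0).
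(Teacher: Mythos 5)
Your reduction to simple connectivity via Proposition \ref{proposition:nonpos} and the Cartan--Hadamard theorem is exactly the paper's first step, and your directed-union argument (extending the directedness hypothesis from pairs to finite sets, and invoking compactness) is correct as far as it goes. Where you genuinely diverge is in how simple connectivity is established: the paper does not argue directly, but observes that the relative ascending links $lk_{\uparrow}(\{b\},v)$ are single points (Remark \ref{remark:ascending}) and then cites Definition 2.30 and Theorem 2.32 of \cite{Farley} to conclude that $\Delta^{f}_{\mathcal{B}}$ is contractible. Your plan --- a self-contained collapse of each $\tilde{D}_{w}$ onto the vertex $w$ --- is a reasonable substitute and can be made to work, but as written it has a real gap in the Morse-theoretic step.

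The gap is in the acyclicity argument, and it stems from an incomplete picture of the face structure of a cube. A cube $\mathcal{C}(v_{1},v_{2})$ has \emph{two} codimension-one faces in each active direction $b\in v_{2}$: the ``bottom'' face $\mathcal{C}(v_{1},v_{2}-\{b\})$ and the ``top'' face $\mathcal{C}((v_{1}-\{b\})\cup Bas(b),\,v_{2}-\{b\})$. Your toggling rule only ever produces bottom faces and cofaces; the top faces are matched according to their own ambient vertex $(v_{1}-\{b\})\cup Bas(b)$. Consequently, when a gradient path exits a cube through a top face, the element $b$ is replaced by the elements of $Bas(b)$, which may sit anywhere in your well-ordering of $\mathcal{B}$ --- so the index of the smallest expandable element need not decrease, and your stated reason for acyclicity fails. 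The repair is as follows. First check that $u\preceq w$ holds if and only if $\{c\}\preceq r_{c}(w)$ for every $c\in u$ (expansions never mix supports), so that the set of expandable elements of a cube in $\tilde{D}_{w}$ depends only on its minimal vertex $v_{1}$ and not on $v_{2}$; this makes your matching a genuine involution on each block of cubes sharing a given $v_{1}$, with the sole critical cell being the vertex $w$. Then observe that every bottom face of a matched-down cube, other than the matched one, still contains the least expandable element of $v_{1}$ in its active set and is therefore itself matched downward; hence a gradient path can only continue through a top face, which strictly increases the height $|v_{1}|$ of the minimal vertex. Since $|v_{1}|\leq|w|$ for every cube of $\tilde{D}_{w}$, gradient paths have bounded length and the matching is acyclic. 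You should also say a word about why the collapse is valid when $\tilde{D}_{w}$ is infinite, which can happen when infinitely many $b$ share a contraction basin; the bounded length of gradient paths is what saves you there.
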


\begin{proof}
By Proposition \ref{proposition:nonpos}, $\Delta^{f}_{\mathcal{B}}$ has non-positive curvature.
It now suffices, by the Cartan-Hadamard Theorem \cite{BH}, to prove that $\Delta^{f}_{\mathcal{B}}$
is simply connected. 

Here we can appeal to the results of \cite{Farley}. We note that the relative ascending link
$lk_{\uparrow}(\{ b \}, v)$ is always a point when $\{ b \} \precneq v$, by Remark \ref{remark:ascending}.
It follows that $\mathcal{B}$ is an $n$-connected expansion set for all $n$, by Definition 2.30 from \cite{Farley}. Thus, by Theorem 2.32 from \cite{Farley}, $\Delta^{f}_{\mathcal{B}}$ is contractible.
\end{proof}

\begin{proposition} (Local finiteness)
\label{proposition:localfin}
Assume the hypotheses of Theorem \ref{theorem:main}. Suppose that, for each 
vertex $v \in \mathcal{V}_{\mathcal{B}}$, $v = Bas(b)$ for only finitely many $b \in \mathcal{B}$.
Then $\Delta^{f}_{\mathcal{B}}$ is locally finite.
\end{proposition}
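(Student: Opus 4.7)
The plan is to bound, for an arbitrary vertex $v \in \left(\Delta^{f}_{\mathcal{B}}\right)^{0}$, the number of cubes $\mathcal{C}(v_{1},v_{2})$ that have $v$ as a corner. Since a cubical complex is locally finite at $v$ precisely when $v$ lies in only finitely many cubes, this will suffice.

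First, I will parameterize the cubes containing $v$ via the direct corner description from Remark \ref{remark:cubes}. Suppose $\mathcal{C}(v_{1},v_{2})$ contains $v$ as a corner. Each $b \in v_{2}$ contributes either $\{b\}$ or $Bas(b)$ to $v$; write $F \subseteq v_{2}$ for the set of $b$ contributing $\{b\}$ and $E = v_{2} - F$ for the set contributing $Bas(b)$. Then $F \subseteq v$, the set $G := \bigcup_{b \in E} Bas(b)$ is a subset of $v$ disjoint from $F$ and partitioned by the blocks $\{Bas(b) : b \in E\}$, and the remaining elements of $v$ coincide with the inert set $v_{1} - v_{2}$. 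Conversely, any choice of a subset $F \subseteq v$ whose elements $b$ all satisfy $|\mathcal{E}(b)| = 2$, a subset $G \subseteq v - F$ with a chosen partition into blocks $S_{1}, \ldots, S_{k}$, and for each block $S_{i}$ an element $b_{i} \in \mathcal{B}$ with $Bas(b_{i}) = S_{i}$, determines a cube $\mathcal{C}(v_{1},v_{2})$ with $v$ as a corner by setting $v_{1} = (v - G) \cup \{b_{1}, \ldots, b_{k}\}$ and $v_{2} = F \cup \{b_{1}, \ldots, b_{k}\}$.

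To finish, I will bound the number of such parameter tuples. Because $v$ is finite, there are only finitely many subsets $F$ and $G$ of $v$, and finitely many partitions of $G$. Each block $S_{i} \subseteq v$ is itself a vertex of $\mathcal{V}_{\mathcal{B}}$ (its elements inherit pairwise-disjoint supports from $v$), so by the hypothesis of the proposition there are only finitely many $b_{i} \in \mathcal{B}$ with $Bas(b_{i}) = S_{i}$. Multiplying these finite counts bounds the number of cubes containing $v$, and hence yields local finiteness.

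The main item to verify carefully will be the parameterization itself, in particular that every cube with $v$ as a corner actually arises from some data tuple and that the resulting $v_{1}$ and $v_{2}$ give a valid cube (the key observation being that $supp(b_{i}) = supp(Bas(b_{i}))$, so the supports in $v_{1}$ remain pairwise disjoint and $v_{1} \in \mathcal{V}_{\mathcal{B}}$). This is essentially bookkeeping built directly on Remark \ref{remark:cubes} and the disjoint-support condition in Definition \ref{definition:simpleexpansionsets}.
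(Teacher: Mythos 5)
Your proof is correct and rests on the same two counting ingredients as the paper's: simplicity bounds the expansion data by subsets of the finite set $v$, and the hypothesis that each vertex of $\mathcal{V}_{\mathcal{B}}$ is $Bas(b)$ for only finitely many $b$ bounds the contraction data. The only difference is presentational — you enumerate the cubes at $v$ directly via the corner parameterization, while the paper counts the vertices adjacent to $v$ and leaves implicit the (easy) passage from a locally finite $1$-skeleton to a locally finite cubical complex.
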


\begin{proof}
It suffices to show that each $v \in \left( \Delta^{f}_{\mathcal{B}} \right)$ is adjacent to only finitely many vertices. Now we note that any such adjacent vertex is either the result of expanding 
at some $b \in v$, or contracting at some $Bas(b) \subseteq v$. These operations result in 
either a vertex of greater height, or lesser height, respectively. It is not possible for two vertices of the same height to be adjacent.

Since $\mathcal{B}$ is simple, there are no more than $|v|$ adjacent vertices of greater height. This is because there is at most one expansion to be made at each $b \in v$. On the other hand, the hypothesis guarantees that there are only finitely many possible contractions, making  the number of adjacent vertices of lesser height finite, as well.

\end{proof}

\section{Applications} \label{section:apps}

\subsection{Thompson's group $V$ (conclusion)} \label{subsection:V}

We will now conclude the proof that $V$ acts on a locally finite CAT(0) cubical complex. 

We must check the hypotheses of Theorem \ref{theorem:main}. We have seen that $\mathcal{B}$ is a simple expansion set (Example \ref{example:Vpt1}). Thus, it remains to show that the vertices
of $\Delta^{f}_{\mathcal{B}}$ are a directed set with respect to $\preceq$. 
Let $v = \{ b_{1}, \ldots, b_{m} \} \in \left(\Delta^{f}_{\mathcal{B}}\right)^{0}$ be arbitrary. If $e$ is an ascending edge leading upwards from $v$, then the terminal vertex of $e$ is the result of replacing 
some or all of the $b_{i}$ with their left and right halves (as described in Example \ref{example:Vpt1}).  
It follows that there is an ascending edge path that leads from $v$ up to a vertex
$v' = \{ b'_{1}, \ldots, b'_{\ell} \}$, where each $b'_{j} = [\sigma,B]$ for some 
$\sigma \in S_{V}$ such that $B$ is the domain of $\sigma$. One concludes from the definition of the equivalence relation and the definition of $S_{V}$ that $[\sigma,B] = [id_{\mid}, \sigma(B)]$ and $\sigma(B) = B_{\omega}$, for some finite binary string $\omega$. Thus, we have 
\[ v = \{ b_{1}, \ldots, b_{m} \} \preceq \{ [id, B_{\omega_{1}}], \ldots, [id, B_{\omega_{\ell}}] \} = v', \]
for some collection $B_{\omega_{1}}, \ldots, B_{\omega_{\ell}}$. 
The vertex $v'$ can be identified with a partition $\mathcal{P}'$ of $\mathcal{C}$ into the sets $B_{\omega_{i}}$.  Any refinement $\mathcal{P}''$ of $\mathcal{P}'$ by balls
can be obtained by following another ascending edge path.

Now, assembling the argument from the above observations, given any two vertices $v_{1}$ and $v_{2}$, we can write $v_{1} \preceq v'_{1}$ and
$v_{2} \preceq v'_{2}$, where $v'_{i}$ (for $i=1,2$) is a partition of $\mathcal{C}$ into balls. Any two such partitions have a common refinement, so we can find $v''$ such that $v'_{1} \preceq v''$ and
$v'_{2} \preceq v''$. Thus $v''$ is the required common upper bound of $v_{1}$ and $v_{2}$.
We  conclude from Theorem \ref{theorem:main} that $\Delta^{f}_{\mathcal{B}}$ is a CAT(0) cubical complex. 

Next we would like to check the hypothesis of
Proposition \ref{proposition:localfin}. We will in fact show that any two-element vertex $\{ b_{1}, b_{2} \} \in \mathcal{V}_{\mathcal{B}}$
is the contracting basin $Bas(b)$ for precisely two $b \in \mathcal{B}$.  (Of course, if $v \in \mathcal{V}_{\mathcal{B}}$ and $|v| \neq 2$, then $v$ cannot be the contracting basin of any $b \in \mathcal{B}$, by the definition of $\mathcal{E}$.) Indeed, let
$b_{1} = [f_{1},B_{\omega_{1}}]$ and $b_{2} = [f_{2},B_{\omega_{2}}]$. Consider 
$b_{(i,j)} = [f,X]$, where 
\[ f_{(i,j)}(x) = \begin{cases} f_{i} \sigma_{0}^{\omega_{i}}(x) & \text{ if } x \in B_{0}  \\
f_{j} \sigma_{1}^{\omega_{j}}(x)  & \text{ if } x \in B_{1} \\
\end{cases} \]
and $(i,j) \in \{ (1,2), (2,1) \}$. For both choices of $(i,j)$, $Bas(b_{(i,j)}) = \{ b_{1}, b_{2} \}$. Thus, there are at least two $b$ such that $Bas(b) = \{ b_{1}, b_{2} \}$.

Conversely, suppose that $Bas(b) = \{ b_{1}, b_{2} \}$. We can assume that $b = [f,X]$, for some
$f$ that is a finite disjoint union of members of $S_{V}$. (If $b = [g,B_{\omega}]$, then we consider the pair $[g \sigma_{\epsilon}^{\omega}, X]$, where $\epsilon$ is the empty binary string. The definition of equivalence implies $(g, B_{\omega}) \sim (g \sigma_{\epsilon}^{\omega}, X)$,
so $b = [g \sigma_{\epsilon}^{\omega}, X]$.) By our assumption, we have
\[ \{ [f_{\mid B_{0}}, B_{0}], [f_{\mid B_{1}}, B_{1}] \} = \{ b_{1}, b_{2} \}. \]
Let us assume that $[f_{\mid B_{0}}, B_{0}] = b_{1}$ and $[f_{\mid B_{1}}, B_{1}] = b_{2}$, the other case being similar. By the definition of $\sim$, we 
have $f_{\mid B_{0}} = f_{1} \sigma$, for $\sigma \in S_{V}$ that carries $B_{0}$ bijectively to $B_{\omega_{1}}$. There is only one such $\sigma$ in $S_{V}$, namely $\sigma_{0}^{\omega_{1}}$. Thus, $f_{\mid B_{0}} = f_{1} \sigma_{0}^{\omega_{1}}$. Similarly,
$f_{\mid B_{1}} = f_{2} \sigma_{1}^{\omega_{2}}$. Thus, $b = b_{(1,2)}$. It follows easily that
there are no more than two members of $\mathcal{B}$ having the contracting basin $\{ b_{1}, b_{2} \}$. Proposition \ref{proposition:localfin} now applies, showing that $\Delta^{f}_{\mathcal{B}}$ is locally finite.

The group $V$ acts on $\mathcal{B}$ on the left in a natural way, that, furthermore, permutes cubes. Thus, $V$ acts isometrically on the CAT(0) cubical complex $\Delta^{f}_{\mathcal{B}}$. It is straightforward to show that the action has finite stabilizers.
A complete proof of these facts has appeared elsewhere \cite{Farley, FH2}.

The above method of proof, with minor changes, can be used to show that $F$, $T$, and the $n$-ary generalizations $F_{n}$, $T_{n}$, $V_{n}$ act properly on  
locally finite CAT(0) cubical complexes. 

\subsection{Finite similarity structure groups}

Hughes \cite{Hughes} defined a class of groups determined by finite similarity structures. Thompson's group $V$ is a member of the class, as are the $n$-ary generalizations $V_{n}$. Most of the basic definitions in this subsection are due to Hughes \cite{Hughes}. The sole exception is the definition of the complex $\Delta^{f}_{\mathcal{B}}$, which previously appeared in \cite{Farley} and \cite{FH2}. (The complexes considered in \cite{FH1} had the same vertex sets, but a different simplicial structure.) 

 Recall that an \emph{ultrametric} on a set $X$ is a metric satisfying the following strong form of the triangle inequality:
\[ d(x,y) \leq \mathrm{max}\{ d(x,z), d(y,z) \}, \]
for all $x$, $y$, $z \in X$. Assume that $X$ is a compact ultrametric space. A \emph{finite similarity structure} assigns to each pair of metric balls $B_{1}$, $B_{2} \subseteq X$ a (possibly empty) collection
$\mathrm{Sim}_{X}(B_{1}, B_{2})$ of surjective \emph{similarities}; i.e., a collection of surjective maps
$h: B_{1} \rightarrow B_{2}$ such that $d(h(x),h(y)) = \lambda d(x,y)$, for some constant $\lambda$ that depends only on $h$. Thus, each $h$ stretches or compresses distances by a scaling factor $\lambda$. The assignments are further required to satisfy:
\begin{enumerate}
\item (finiteness) each $\mathrm{Sim}_{X}(B_{1},B_{2})$ is finite;
\item (identities) $id_{B} \in \mathrm{Sim}_{X}(B,B)$, for each ball $B$;
\item (inverses) if $h \in \mathrm{Sim}_{X}(B_{1}, B_{2})$, then $h^{-1} \in \mathrm{Sim}_{X}(B_{2},B_{1})$;
\item (compositions) if $h_{1} \in \mathrm{Sim}_{X}(B_{1},B_{2})$ and 
$h_{2} \in \mathrm{Sim}_{X}(B_{2},B_{3})$,
then $h_{2} \circ h_{1} \in \mathrm{Sim}_{X}(B_{1},B_{3})$;
\item (restrictions) if $h \in \mathrm{Sim}_{X}(B_{1}, B_{2})$ and $B$ is contained in $B_{1}$, then
$h_{\mid B} \in \mathrm{Sim}_{X}(B, h(B))$.
\end{enumerate}
Let $\gamma$ be a self-bijection of $X$.  We say that $\gamma$ is \emph{locally determined by $\mathrm{Sim}_{X}$} if there is a finite partition $\mathcal{P} = \{ B_{1}, \ldots, B_{m} \}$  of $X$ by balls such that
$\gamma(B_{i})$ is a ball and
$\gamma_{\mid B_{i}} \in \mathrm{Sim}_{X}(B_{i}, \gamma(B_{i}))$ for each $i$. It is not difficult to show that the collection of self-bijections of $X$ that are locally determined by $\mathrm{Sim}_{X}$ is a group, called the \emph{finite similarity structure (FSS) group determined by $\mathrm{Sim}_{X}$}, and 
denoted $\Gamma(\mathrm{Sim}_{X})$. It is a subgroup of $\mathrm{Homeo}(X)$. 

For each FSS group $\Gamma(\mathrm{Sim}_{X})$, we can build an expansion set $\mathcal{B}$ as follows. We begin with the set of all pairs $(f,B)$, where $B$ is a metric ball and $f$ is a \emph{local similarity embedding}. That is, $f: B \rightarrow X$ is an injection, and there is a partition
$\mathcal{P} = \{ B_{1}, \ldots, B_{m} \}$ of $B$ into balls such that $f(B_{i})$ is a ball
and $f_{\mid B_{i}} \in \mathrm{Sim}_{X}(B_{i}, f(B_{i}))$ for each $i$. Two such pairs $(f_{1},B_{1})$,
$(f_{2},B_{2})$ are equivalent if there is some $h \in \mathrm{Sim}_{X}(B_{1},B_{2})$
such that $f_{2} \circ h = f_{1}$. We write $(f_{1},B_{1}) \sim (f_{2},B_{2})$. It is straightforward to check that $\sim$ is an equivalence relation on the set of all pairs $(f,B)$. We let $[f,B]$ denote the equivalence class of $(f,B)$, and let $\mathcal{B}$ denote the set of all such equivalence classes.

The set $\mathcal{B}$ is the desired expansion set. For each $b = [f,B] \in \mathcal{B}$, we let
$supp(b) = f(B)$ and 
\[ \mathcal{E}(b) = \{ \{ b \}, \{ [f_{\mid B'},B'] \mid B' \in \mathcal{P}_{max} \}\}, \]
where  $\mathcal{P}_{max}$ is the unique partition of $B$ by maximal proper subballs. It is straightforward to check that the above assignments are well-defined, and result in a simple expansion set.

The proof that the vertices of $\Delta^{f}_{\mathcal{B}}$ form a directed set follows the general pattern set by Thompson's group $V$: each partition 
$\mathcal{P}$ of a given ball $B$ into subballs can be produced as the end of a sequence
\[ \{ B \} = \mathcal{P}_{0}, \mathcal{P}_{1}, \ldots, \mathcal{P}_{\ell} = \mathcal{P}, \]
where, for each $i \in \{0, \ldots, \ell-1 \}$, the partition $\mathcal{P}_{i+1}$ can be obtained by replacing some ball $B'$ in $\mathcal{P}_{i}$ by the maximal proper subballs of $B'$. For a given vertex $v \in \Delta^{f}_{\mathcal{B}}$ and $[f,B] \in v$, we can therefore perform expansions
on $\{ [f,B] \}$ and produce a set
\[ S = \{ [h_{1},B_{1}], \ldots, [h_{\ell},B_{\ell}] \} \]
such that $h_{i}(B_{i})$ is a ball and
$h_{i} \in \mathrm{Sim}_{X}(B_{i}, h_{i}(B_{i}))$ for all $i$.
(Here the $B_{i}$ can be any collection that witness the fact that 
$f$ is a local similarity embedding.) It then follows that 
$[h_{i}, B_{i}] = [id, h_{i}(B_{i})]$, for each $i$, so $S$ can effectively be regarded as a partition of $supp([f,B])$. Thus, applying the above reasoning entry by entry to $v$, there is a sequence of ascending edges connecting $v$ to a vertex $v'$, where the latter
vertex can be identified with a partition of $X$. The proof concludes as before: given $v_{1}, v_{2} \in \left( \Delta^{f}_{\mathcal{B}} \right)^{0}$, we can produce $v'_{1}$ and $v'_{2}$ such that 
$v_{i} \preceq v'_{i}$ and $v'_{i}$ represents a partition of $X$, for $i=1,2$. Since $v'_{1}$ and $v'_{2}$ have a common refinement $v''$,
and $v'_{i} \preceq v''$, for $i=1,2$, the vertices of $\Delta^{f}_{\mathcal{B}}$ are a directed set, as required.

The group
$\Gamma(\mathrm{Sim}_{X})$ acts on $\Delta^{f}_{\mathcal{B}}$ with finite vertex stabilizers \cite{Farley, FH2}. The author does not know a general criterion that will guarantee local finiteness of
$\Delta^{f}_{\mathcal{B}}$.

\subsection{Houghton's groups $H_{n}$}
Finally, we consider Houghton's groups $H_{n}$. These are in fact a special type of FSS group (by results of \cite{FHBraided}), but, since the latter fact is far from obvious, we include some additional detail.
 
Let $n$ be a positive integer. We let $X = \mathbb{N}_{1} \cup \ldots \cup \mathbb{N}_{n}$
be the disjoint union of $n$ copies of the set of natural numbers $\mathbb{N}$. We define transformations 
as follows:
\begin{itemize}
\item For each $i \in \{ 1, \ldots, n \}$ and $k,\ell \geq 1$, let
$\tau^{i}_{k,\ell}: [k, \infty) \rightarrow [\ell, \infty)$ be the unique translation that 
sends $[k, \infty) \subseteq \mathbb{N}_{i}$ bijectively to $[\ell, \infty) \subseteq \mathbb{N}_{i}$.
\item for each pair of points $x, y \in X$, let $\sigma_{x}^{y}$ be the unique function
from $\{ x \}$ to $\{ y \}$.
\end{itemize}
The above transformations, along with the null (empty) transformation $0$, form an inverse semigroup, which we denote $S_{H_{n}}$. The Houghton group $H_{n}$ can be defined as the collection of all bijections of $X$ that are realizable as finite disjoint unions of members of $S_{H_{n}}$. 

The construction of an expansion set for $H_{n}$ largely follows the example of Thompson's group $V$.
We consider all pairs $(f,D)$, where $D$ is either a singleton or a ray $[k,\infty) \subseteq \mathbb{N}_{i}$ (for appropriate $k$ and $i$), and $f$ is a function with domain $D$ that is expressible as a finite disjoint union of members of $S_{H_{n}}$. We write $(f_{1}, D_{1}) \sim (f_{2}, D_{2})$ if there is some
$s \in S_{H_{n}}$ such that $s$ maps $D_{1}$ bijectively to $D_{2}$, and $f_{2} \circ s = f_{1}$. The relation $\sim$ is an equivalence relation. The equivalence classes are denoted $[f,D]$, and the set of all such equivalence classes is $\mathcal{B}$. This is the required expansion set.

For a pair $b = [f,D] \in \mathcal{B}$, we define $supp(b) = f(D)$. We define $\mathcal{E}(b)$ in one of two ways, depending on whether $D$ is  a singleton or a ray:
\begin{itemize}
\item if $D = \{ x \}$, then $\mathcal{E}(b) = \{ \{ b \} \}$;
\item if $D = [k, \infty) \subseteq \mathbb{N}_{i}$, then we let 
$\mathcal{E}(b) = \{ \{ b \}, \{ [f_{\mid}, \{ k \}], [f_{\mid}, [k+1,\infty)] \} \}$.
\end{itemize}
The result is a simple expansion set $\mathcal{B}$, since $|\mathcal{E}(b)| \leq 2$ for all $b \in \mathcal{B}$. 

Most of the remaining details are reminiscent of the Thompson's group $V$ case. The directed set condition
 follows from the fact that each vertex $v$ in $\Delta^{f}_{\mathcal{B}}$ can be connected by an ascending path
to a vertex $v'$ that represents a partition of $X$. Since any two such partitions have a common refinement, the directed set condition follows as before.

It is straightforward to argue that a given $v \in \mathcal{V}_{\mathcal{B}}$ is the contracting basin of at most one
$b \in \mathcal{B}$. Local finiteness of $\Delta^{f}_{\mathcal{B}}$ follows from Proposition 
\ref{proposition:localfin}. 

The proof that $H_{n}$ acts properly and isometrically on $\Delta^{f}_{\mathcal{B}}$ has appeared elsewhere \cite{Farley, FH2}, and is, in any case, comparatively straightforward.

\bibliographystyle{plain}
\bibliography{biblio}

\end{document}